\newtheorem{thm}{Theorem}[section]
\newtheorem{lem}[thm]{Lemma}
\newtheorem{defin}[thm]{Definition}
\newtheorem{rem}[thm]{Remark}
\newtheorem{exa}[thm]{Example}
\numberwithin{equation}{section}
\newcommand{\dist}{\mbox{$\,$dist$\,$}}
\begin{document}


\baselineskip=17pt




\title[SRB-like measures]{SRB-like measures for $C^0$ dynamics}

\author[E. Catsigeras]{Eleonora Catsigeras}
\address{Instituto de Matem\'{a}tica y Estad\'{\i}stica Prof. Ing. Rafael Laguardia \\ Facultad de Ingenier\'{\i}a\\ Av. Herrera y Reissig 565, C.P.11300,
Montevideo, Uruguay}
\email{eleonora@fing.edu.uy}

\author[H. Enrich]{Heber Enrich}

\date{}




\subjclass[2010]{Primary 37A05; Secondary 28D05}

\keywords{Observable measures, SRB measures, physical measures.}


\begin{abstract}
For any continuous map $f\colon M \to M$ on a compact manifold $M$,
we define the  SRB-like    (or   observable)  probabilities   as a generalization of   Sinai-Ruelle-Bowen (i.e. physical) measures. We prove that  $f$  has observable measures, even if SRB measures do not exist. We prove that the definition of observability is optimal, provided that the purpose of the researcher is to describe the asymptotic statistics for Lebesgue almost every initial state. Precisely, the never empty set ${\mathcal O}$ of all the observable measures, is the minimal weak$^*$ compact set of Borel probabilities in $M$ that contains the limits (in the weak$^*$ topology)  of all the convergent subsequences of the empiric probabilities  $ \{(1/n) \sum_{j= 0}^{n-1}\delta_{f^j(x)}\}_{n \geq 1}$, for Lebesgue almost all $x \in M$. We prove that any isolated measure in ${\mathcal O}$ is   SRB. Finally we conclude that if ${\mathcal O}$ is finite or countable infinite, then  there exist  (up to countable many)  SRB measures such that the union of their basins cover $M$ Lebesgue a.e.

\end{abstract}


\maketitle


\section{Introduction}

Let $f\colon M \to M$ be a continuous map in a compact,
finite-dimensional manifold $M$. Let $m$ be a Lebesgue measure
normalized so that $m(M) =1$, and not necessarily $f$-invariant.
We denote $\mathcal P$ the set of all Borel probability measures in
$M$, provided with the weak$^*$ topology, and a metric structure
inducing this topology.

For any point $x \in M $ we denote
$p\omega (x)$ to the set of all the Borel probabilities in $M $ that
are the  limits in the weak$^*$ topology of the convergent subsequences of the following
sequence
\begin{equation}\label{ec1}
\left\{ \frac{1}{n} \; \sum_{j=0}^{n-1} \delta _{f^j(x)}
\right\} _{ n \in \mathbb{N}}
 \end{equation}
 where $\delta_y$ is the Dirac delta probability measure
supported in $y \in M$.
We call the probabilities of the sequence (\ref{ec1}) \em empiric probabilities \em of the orbit of $x$.
We call $p\omega(x)$
 the \em  limit set \em in ${\mathcal P}$ corresponding to  $x \in M$.

It is classic in Ergodic Theory  the following definition:

\begin{defin}
\label{Defphysical} \em
A probability measure $\mu$ is \em physical \em or \em SRB  \em (Sinai-Ruelle-Bowen), if
$\{\mu\} = pw(x)$  for a set $A(\mu)$ of points $x \in M$ that has positive Lebesgue measure. The set $A(\mu)$ is called \em basin of attraction \em of $\mu$.
\end{defin}
In this paper, as in \cite{viana} and Chapter 11 of \cite{BonattiDiazViana}, we agree to name such a probability $\mu$  an SRB  measure (and also physical as in \cite{young}). This preference is based in three  reasons, which are also our motivations:

\vspace{0.1cm}

\noindent  1.    Our scenario includes \em all the continuous systems\em. Most (namely $C^0$ generic) continuous $f$ are not differentiable. So, no Lyapunov exponents necessarily exist, to be able to assume some kind of hyperbolicity. Thus, we can not assume the existence of an unstable foliation with differentiable leaves. Therefore, we aim to study those systems for which the SRB measures    usually defined in the literature (related with an unstable foliation ${\mathcal F}$), do not exist.  We recall a popularly required property for  $\mu$:    the conditional measures $\mu_x$ of $\mu$, along the local leaves ${\mathcal F}_x$ of a  hyperbolic unstable foliation ${\mathcal F}$, are absolute continuous respect to the internal Lebesgue measures of those leaves. But this latter assumption needs the existence of such a regular foliation ${\mathcal F}$. It is well known that the ergodic theory based on this absolute continuity condition    does not work for generic   $C^1$ systems (that are not $C^{1+\alpha}$), see  \cite{RobinsonYoung, Bruin, Avila}. So, it does not work  for most $C^0$-systems.

\vspace{.1cm}

\noindent 2.   In the modern Differentiable Ergodic Theory, for $C^{1 + \alpha}$-systems that have some hyperbolic behavior, one of the ultimate purposes of searching  measures with absolute continuity properties respect to Lebesgue is to find   probabilities that satisfy Definition \ref{Defphysical}. Therefore, if the system is not $C^{1 +\alpha}$, or is not hyperbolic-like, but nevertheless exists some   probability $\mu$    describing the asymptotic behavior of the sequence (\ref{ec1})   for a Lebesgue-positive set of initial states  (i.e. $\mu$ satisfies Definition \ref{Defphysical}), then  one of the  initial purposes of research of Sinai, Ruelle and Bowen in  \cite{bowen2, bowen3, ruelle, sinai}, is also achieved. Therefore, it makes sense   (principally for $C^0$-systems) to call  $\mu$ an SRB measure, if it   satisfies Definition \ref{Defphysical}.

\vspace{.1cm}

\noindent 3.    The SRB-like property of some invariant measures which describe   (modulus   $\varepsilon$ for all $\varepsilon>0$) the   behavior of the sequence (1) for   $n$ large enough and for a Lebesgue-positive set of initial states can be also achieved considering the \em observable measures \em that we introduce in Definition \ref{definicionobservable}, instead of restricting to those in Definition \ref{Defphysical}. This new setting will describe the statistics defined by the sequence (\ref{ec1}) of empiric probabilities for Lebesgue almost all initial state  (see Theorem \ref{toeremaminimal0}). This is particularly interesting in the cases in which SRB-measures do not exist (for instance \cite{keller2} and some of the examples in Section \ref{seccionejemplos} of this paper.) So, in the sequel, we use the words physical and SRB as synonymous, and we apply them only to the probability measures   that satisfy Definition \ref{Defphysical}. To generalize this notion, we will call \em observable or SRB-like or physical-like, \em to those measures   introduced in Definition \ref{definicionobservable}. After this agreement   all SRB  measure are SRB-like but not conversely (we provide Examples in Section \ref{seccionejemplos}).

One of the major problems of the Ergodic Theory of Dynamical Systems, is to find   SRB measures. They   are widely studied
 occupying a relevant
interest for those systems that are $C^{1 + \alpha}$ and show some kind of hyperbolicity
(\cite{pesinsinai}, \cite{pughshub2},
\cite{viana}, \cite{BonattiDiazViana}). One of the reasons for searching those measures, is that they    describe the asymptotic behavior of the sequence (\ref{ec1}) for a Lebesgue-positive set of initial states, namely, for a set of spatial conditions that is not negligible from the viewpoint of  an  observer. One observes, through the SRB measures,     the statistics of the orbits through experiments that measure the time-mean of the future evolution of the system, with Lebesgue almost all initial states. But it is unknown if most differentiable systems exhibit SRB measures  (\cite{palis}). Many interesting  $C^0$-systems   do not exhibit SRB    measures. In fact,  there is evidence that  for many $C^0$ systems, Lebesgue almost all initial states define   sequences   (\ref{ec1}) of empiric probabilities  that are convergent \cite{abdenurandersson}, but none of the measures $\mu$ in such  limits has a Lebesgue-positive basin of attraction $A(\mu)$ as required in Definition \ref{Defphysical} to be an SRB measure \cite{abdenurandersson}.
In \cite{keller}, Keller considers an SRB-like property of a measure, even if the sequence (\ref{ec1}) is not convergent. In fact, he takes   those measures $\mu$ that belong to the set $pw(x)$ for a Lebesgue-positive set of initial states $x \in M$, regardless if $ pw(x)$ coincides or not with $\{\mu\}$. Precisely, Keller considers  those measures $\mu$ for which $\mbox{dist}(\mu, pw(x)) = 0$ for a Lebesgue positive set of points $x \in M$. But, as he also remarks in his definition, that kind of weak-SRB   measures may not exist.  We introduce now the following  notion, which generalizes the notion of observability of Keller, and the notion of SRB measures in Definition \ref{Defphysical}:

\begin{defin}
  \label{definicionobservable} \em
A probability measure $\mu \in \mathcal P$ is \em observable \em  or \em SRB-like \em  or \em physical-like \em if
for all  $\varepsilon
>0 $  the set $A_{\varepsilon }(\mu)= \{x \in M:
 \mbox{ dist } ( p\omega(x), \mu )<  \varepsilon \}$ has
 positive Lebesgue measure.
 The set $A_{\varepsilon}(\mu )$ is called basin of $\varepsilon$-attraction of $\mu$.
 We denote with ${\mathcal O}$ the set of all observable measures.
\end{defin}

It is immediate from Definitions \ref{Defphysical} and \ref{definicionobservable}, that every SRB measure is observable. But not
every observable measure is SRB (we provide examples in Section \ref{seccionejemplos}). It is standard to check that any observable measure is $f$-invariant. (In fact, if ${\mathcal P}_f \subset {\mathcal P}$ denotes the weak$^*$-compact set of $f$-invariant probabilities, since $p\omega(x) \in {\mathcal P}_f$ for all $x$, we conclude that $\mu \in \overline {{\mathcal P}_f} = {\mathcal P}_f$ for all $\mu \in {\mathcal O}$.)
For the experimenter, the observable measures as defined in \ref{definicionobservable} should have  the same relevance  as the SRB measures defined in \ref{Defphysical}.
In fact, the basin of $\varepsilon$-attraction $A_{\varepsilon}(\mu)$ has positive Lebesgue measure \em for all $\varepsilon >0$. \em The $\varepsilon$-approximation lays in the space ${\mathcal P}$ of probabilities, but it can be easily translated (through the functional operator induced by   the probability $\mu$  in the   space   $ C^0(M, \mathbb{R})$) to an  $\varepsilon$-approximation (in time-mean) towards an \lq\lq attractor\rq\rq \ in the ambient manifold $M$. Precisely, if $\mu $ is observable and $x \in A_{\epsilon}(\mu)$ then,   with a frequency that is asymptotically bounded away from zero,    the  iterates $f^{n}(x)$  for certain values of $n $ large enough,  will $\varepsilon$-approach    the   support of $\mu$.   Note that also for an SRB measure $\mu$ this  $\varepsilon$-approximation to the support of $\mu$ holds in the ambient manifold $M$  with $\varepsilon \neq 0$. Namely, assuming that there exists an SRB measure $\mu$, the empiric probability (defined in (\ref{ec1}) for Lebesgue almost all orbit in the basin of $\mu$) approximates, but in general   differs from $\mu$,   after any     time $n \geq 1$ of experimentation which is as large as wanted   but  finite. If the experimenter  aims to observe the orbits during a time $n$   large enough, but always finite,   Definition \ref{definicionobservable} of observability   ensures him a $2 \varepsilon$-approximation to the \lq\lq attractor\rq\rq,  for any given $\varepsilon >0$, while Definition \ref{Defphysical} of physical measures ensures him an $\varepsilon$-approximation.  As none of them guarantees a null error, and both of them guarantee an error smaller than $\epsilon >0 $   for arbitrarily small values of $\epsilon >0 $  (if the observation time is large enough), the  practical meanings of both definitions are similar.

\vspace{.3cm}

 \begin{center}

 \noindent {    \textsc{Statement of the results } }

\end{center}

\begin{thm} \label{teoremaexistencia0}   { \bf (Existence of
observable measures) }

For every continuous map $f$, the space $\mathcal O$ of all observable
measures for $f$ is nonempty and   weak$^*$-compact.

\end{thm}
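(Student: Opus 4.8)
The plan is to prove the two assertions separately: first that $\mathcal{O}$ is weak$^*$-compact, then that it is nonempty. Throughout I use that $\mathcal{P}$ with the weak$^*$ topology is a compact metrizable space, with fixed metric $\dist$ inducing it; consequently, for each $x\in M$ the set $p\omega(x)$ is a nonempty compact subset of $\mathcal{P}$ (nonempty because every sequence in the compact metric space $\mathcal{P}$ has a convergent subsequence), and $\dist(p\omega(x),\mu)=\inf_{\nu\in p\omega(x)}\dist(\nu,\mu)$.

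For compactness, since $\mathcal{P}$ is compact it suffices to show that $\mathcal{O}$ is closed. I would take a sequence $\mu_k\in\mathcal{O}$ with $\mu_k\to\mu$ and verify $\mu\in\mathcal{O}$. Given $\varepsilon>0$, choose $k$ with $\dist(\mu_k,\mu)<\varepsilon/2$. Then for every $x\in A_{\varepsilon/2}(\mu_k)$ there is $\nu\in p\omega(x)$ with $\dist(\nu,\mu_k)<\varepsilon/2$, so $\dist(\nu,\mu)<\varepsilon$ by the triangle inequality, giving $x\in A_\varepsilon(\mu)$. Hence $A_{\varepsilon/2}(\mu_k)\subseteq A_\varepsilon(\mu)$, and since $\mu_k$ is observable the left-hand set has positive Lebesgue measure; therefore so does $A_\varepsilon(\mu)$. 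As $\varepsilon>0$ was arbitrary, $\mu\in\mathcal{O}$.

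For nonemptiness I would argue by contradiction, turning the compactness of $\mathcal{P}$ and the ever-present nonvacuity of the sets $p\omega(x)$ into a finite covering of $M$. Suppose $\mathcal{O}=\emptyset$. Then each $\mu\in\mathcal{P}$ fails Definition \ref{definicionobservable}, so there is $\varepsilon_\mu>0$ with $A_{\varepsilon_\mu}(\mu)$ of zero (outer) Lebesgue measure. The open balls $\{B(\mu,\varepsilon_\mu/2):\mu\in\mathcal{P}\}$ cover $\mathcal{P}$, so by compactness finitely many of them, centered at $\mu_1,\dots,\mu_k$, already cover $\mathcal{P}$. Now fix any $x\in M$ and pick $\nu\in p\omega(x)$; then $\nu\in B(\mu_i,\varepsilon_{\mu_i}/2)$ for some $i$, whence $\dist(p\omega(x),\mu_i)\le\dist(\nu,\mu_i)<\varepsilon_{\mu_i}$, i.e.\ $x\in A_{\varepsilon_{\mu_i}}(\mu_i)$. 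This shows $M=\bigcup_{i=1}^{k}A_{\varepsilon_{\mu_i}}(\mu_i)$, so by subadditivity $1=m(M)\le\sum_{i=1}^{k}m^*\!\big(A_{\varepsilon_{\mu_i}}(\mu_i)\big)=0$, a contradiction. Hence $\mathcal{O}\neq\emptyset$.

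The only delicate point is that the sets $A_\varepsilon(\mu)$ need not be Borel a priori, so I would phrase everything in terms of outer Lebesgue measure $m^*$: countable subadditivity of $m^*$ is exactly what the covering step uses, and reading ``positive Lebesgue measure'' as $m^*>0$ is all that the inclusion $A_{\varepsilon/2}(\mu_k)\subseteq A_\varepsilon(\mu)$ needs in the compactness part. The substantive content is the nonemptiness, and its engine is the compactness of $\mathcal{P}$, which converts the pointwise, $x$-by-$x$ fact that $p\omega(x)\neq\emptyset$ into a single finite family of $\varepsilon$-basins covering all of $M$ — and these cannot all be null.
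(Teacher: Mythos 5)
Your proposal is correct and follows essentially the same route as the paper: closedness of $\mathcal{O}$ via the triangle inequality between nearby basins of $\varepsilon$-attraction, and nonemptiness by contradiction using a finite subcover of the compact space $\mathcal{P}$ together with the fact that $p\omega(x)\neq\emptyset$ for every $x$ (your covering of $M$ by finitely many null sets is just the dual phrasing of the paper's intersection of finitely many full-measure sets). The remark about working with outer measure is a sensible extra precaution but does not change the argument.
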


We prove this theorem  in Section
\ref{proofsMainTheorems}.
It says that Definition \ref{definicionobservable}
  is weak enough to ensure  the existence of observable measures for any continuous $f$.
  But, if considering the set ${\mathcal P}_f$ of all the invariant measures,  one would   obtain also the existence of   probabilities that  describe  completely the limit set $pw(x)$ for a Lebesgue-positive set of points $x \in M$ (if so, for all points in $M$). Nevertheless, that would be less economic. In fact, along Section \ref{seccionejemplos}, we exhibit paradigmatic systems for which   most invariant measures are not observable. Also we show that observable measures (as well as SRB measures  defined in \ref{Defphysical}) are not necessarily ergodic. The ergodic measures, or a subset of them, may be not suitable respect to a non-invariant Lebesgue measure describing the probabilistic distribution of the initial states in $M$. In fact,  there exist examples (we will provide one   in Section \ref{seccionejemplos}), for which the set of points $x \in M$ such that $p\omega (x)$ is an ergodic probability has zero Lebesgue measure.

  In   Definition \ref{Defphysical},  we called basin of attraction $A(\mu)$ of an SRB-measure  $\mu$ to the set $A(\mu) = \{x \in X: p\omega(x) = \{\mu\}\, \}$. Inspired in that definition we introduce the following:

  \begin{defin}
  \label{definitionBasinOfK} \em
  We call   \em basin of attraction $A({\mathcal K})$ \em of any nonempty weak$^*$ compact subset ${\mathcal K}$ of probabilities, to
\begin{equation} \nonumber
A({\mathcal K}) := \{ x \in M: \ p\omega(x) \subset {\mathcal K}\}.  \end{equation}
  \end{defin}
  We are interested in those sets ${\mathcal K} \subset {\mathcal P}$ having basin $A({\mathcal K})$ with positive Lebesgue measure. We are also interested in not adding unnecessary probabilities to the set ${\mathcal K}$. The following result  states that the optimal choice, under those interests, is a nonempty compact subset of the  observable measures    defined in \ref{definicionobservable}.

\begin{thm} \label{toeremaminimal0}{  \bf (Full optimal attraction of ${\mathcal O}$)}

The set $\mathcal O$ of all observable measures for $f$ is the minimal
weak$^*$ compact subset of $\mathcal P$ whose basin of attraction has
total Lebesgue measure. In other words, ${\mathcal O}$ is minimally weak$^*$ compact containing,
for Lebesgue almost all initial state,
the limits of  the convergent subsequences of  \em (\ref{ec1}). \em
\end{thm}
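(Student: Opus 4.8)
The plan is to verify the two properties that together characterize $\mathcal{O}$ as the minimum: first, that its basin $A(\mathcal{O})$ carries total Lebesgue measure, and second, that $\mathcal{O}$ is contained in every weak$^*$ compact set $\mathcal{K}$ with $m(A(\mathcal{K}))=1$. Since $\mathcal{O}$ is already weak$^*$ compact by Theorem \ref{teoremaexistencia0}, these two facts say exactly that $\mathcal{O}$ is the smallest weak$^*$ compact set whose basin has full measure.

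For the first property I would show that the complement $B = \{x : p\omega(x) \not\subseteq \mathcal{O}\}$ is Lebesgue-null. If $x\in B$ there is some $\nu\in p\omega(x)$ with $\nu\notin\mathcal{O}$; by the definition of observability, each such non-observable $\nu$ admits a radius $\varepsilon_\nu>0$ for which $A_{\varepsilon_\nu}(\nu)$ has zero Lebesgue measure. The balls $\{B(\nu,\varepsilon_\nu)\}_{\nu\notin\mathcal{O}}$ cover the open set $\mathcal{P}\setminus\mathcal{O}$, open because $\mathcal{O}$ is compact. Since $M$ is compact metric, $\mathcal{P}$ with the weak$^*$ topology is a compact metric space, hence second countable, so the subspace $\mathcal{P}\setminus\mathcal{O}$ is Lindel\"of and the cover has a countable subcover $\{B(\nu_k,\varepsilon_{\nu_k})\}_{k\in\mathbb{N}}$. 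Now any $x\in B$ has some $\nu\in p\omega(x)$ lying in a $B(\nu_k,\varepsilon_{\nu_k})$, whence $\dist(p\omega(x),\nu_k)<\varepsilon_{\nu_k}$ and $x\in A_{\varepsilon_{\nu_k}}(\nu_k)$. Thus $B\subseteq\bigcup_k A_{\varepsilon_{\nu_k}}(\nu_k)$, a countable union of null sets, so $m(B)=0$ and $m(A(\mathcal{O}))=1$.

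For minimality, let $\mathcal{K}$ be weak$^*$ compact with $m(A(\mathcal{K}))=1$ and suppose, toward a contradiction, that some $\mu\in\mathcal{O}$ does not belong to $\mathcal{K}$. Compactness of $\mathcal{K}$ gives $d_0 := \dist(\mu,\mathcal{K})>0$; set $\varepsilon_0 = d_0/2$. Since $\mu$ is observable, $m(A_{\varepsilon_0}(\mu))>0$. But if $x\in A_{\varepsilon_0}(\mu)\cap A(\mathcal{K})$, there is $\nu\in p\omega(x)\subseteq\mathcal{K}$ with $\dist(\nu,\mu)<\varepsilon_0=d_0/2$, contradicting $\dist(\mu,\mathcal{K})=d_0$. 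Hence $A_{\varepsilon_0}(\mu)$ is disjoint from $A(\mathcal{K})$, forcing $m(A_{\varepsilon_0}(\mu))\le m(M\setminus A(\mathcal{K}))=0$, a contradiction. Therefore $\mathcal{O}\subseteq\mathcal{K}$.

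The main obstacle is the passage from the uncountable family of null sets $\{A_{\varepsilon_\nu}(\nu)\}$ to a countable one, which is what makes their union null; this is handled by the Lindel\"of property of $\mathcal{P}\setminus\mathcal{O}$, itself resting on metrizability of the weak$^*$ topology and on the compactness of $\mathcal{O}$ supplied by Theorem \ref{teoremaexistencia0}. A secondary point requiring care is the measurability of the sets $A_\varepsilon(\mu)$ and of $B$; but since Lebesgue measure is complete and $B$ sits inside a countable union of null sets, $B$ is measurable and null in any case, so the full-measure conclusion is unaffected.
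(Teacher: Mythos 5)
Your proof is correct, and both halves reach the same conclusions the paper does; the minimality argument (taking $\varepsilon_0$ smaller than $\dist(\mu,\mathcal K)$ and showing $A_{\varepsilon_0}(\mu)$ is disjoint from the full-measure set $A(\mathcal K)$) is essentially identical to the paper's. The only genuine divergence is in how you make the full-measure claim $m(A(\mathcal O))=1$ reduce to a \emph{countable} union of null sets: you cover the open set $\mathcal P\setminus\mathcal O$ by balls $B(\nu,\varepsilon_\nu)$ with $m(A_{\varepsilon_\nu}(\nu))=0$ and invoke the Lindel\"of property of the second-countable space $\mathcal P$, whereas the paper exhausts $\mathcal O^c$ by the compact sets $\mathcal K_n=\{\mu:\dist(\mu,\mathcal O)\ge 1/n\}$ and extracts a \emph{finite} subcover of each $\mathcal K_n$, obtaining countability from the union over $n$. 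The two mechanisms are interchangeable here since $\mathcal P$ is compact metric; yours is slightly more direct, while the paper's uses only compactness and avoids any appeal to second countability. Your closing remark on measurability (that $B$ sits inside a countable union of null sets, so completeness of Lebesgue measure suffices) is a point the paper glosses over, and it is handled correctly.
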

We prove this theorem in Section \ref{proofsMainTheorems}.
Finally, let us state the relations between the cardinality of ${\mathcal O}$ and the existence
 of SRB measures according with Definition \ref{Defphysical}.

\begin{thm} [Finite set of observable measures]
\label{teoremaOfinito}

$\mathcal O$ is finite if and only if there exist finitely many SRB measures
such that the union of
their basins of attraction cover $M$ Lebesgue a.e. In this case $\mathcal O$ is the set of SRB measures.

\end{thm}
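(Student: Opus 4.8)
The plan is to prove both implications using the minimality property of Theorem \ref{toeremaminimal0} together with one classical structural fact: for every $x \in M$ the limit set $p\omega(x)$ is nonempty, weak$^*$-compact and \emph{connected}. Connectedness holds because the empiric probabilities $\sigma_n = \frac1n\sum_{j=0}^{n-1}\delta_{f^j(x)}$ satisfy $\sigma_{n+1} - \sigma_n = \frac{1}{n+1}(\delta_{f^n(x)} - \sigma_n)$, so their total variation, and hence the weak$^*$ distance between consecutive terms, tends to $0$; a standard argument then forces the set of limit points of the sequence to be connected.

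For the direct implication I assume $\mathcal{O}=\{\mu_1,\dots,\mu_k\}$ is finite. Every $\mu_i$ is then isolated in $\mathcal{O}$, so I fix $\varepsilon_0>0$ with $\dist(\mu_i,\mu_j)>\varepsilon_0$ for $j\neq i$. By Theorem \ref{toeremaminimal0} the basin $A(\mathcal{O})=\{x: p\omega(x)\subset \mathcal{O}\}$ has total Lebesgue measure; for such $x$, since $p\omega(x)$ is connected and contained in the finite (hence totally disconnected) set $\mathcal{O}$, it must be a single point $\{\mu_{i(x)}\}$. I then claim that for $\varepsilon<\varepsilon_0$ one has $A_\varepsilon(\mu_i)\subset A(\mu_i)$ up to a null set: if $\dist(p\omega(x),\mu_i)<\varepsilon$ and $p\omega(x)=\{\mu_j\}\subset\mathcal{O}$, then $\dist(\mu_j,\mu_i)<\varepsilon_0$ forces $j=i$, i.e. $p\omega(x)=\{\mu_i\}$. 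Since $\mu_i$ is observable, $A_\varepsilon(\mu_i)$ has positive measure, hence so does $A(\mu_i)$, and every $\mu_i$ is therefore SRB. Finally, because the singletons $\{\mu_{i(x)}\}$ exhaust $A(\mathcal{O})$, the union $\bigcup_i A(\mu_i)$ covers $M$ Lebesgue a.e.

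For the converse I assume $\nu_1,\dots,\nu_r$ are SRB measures whose basins cover $M$ Lebesgue a.e. Each $\nu_i$ is observable (every SRB measure lies in $\mathcal{O}$), so $\{\nu_1,\dots,\nu_r\}\subset\mathcal{O}$. The finite set $\mathcal{K}=\{\nu_1,\dots,\nu_r\}$ is weak$^*$-compact and its basin contains $\bigcup_i A(\nu_i)$, so $A(\mathcal{K})$ has total Lebesgue measure. By the minimality in Theorem \ref{toeremaminimal0}, $\mathcal{O}\subset\mathcal{K}$, whence $\mathcal{O}=\{\nu_1,\dots,\nu_r\}$ is finite. The identification of $\mathcal{O}$ with the set of SRB measures then follows at once: every element of $\mathcal{O}$ is SRB (by the direct implication), and conversely every SRB measure is observable and hence belongs to $\mathcal{O}$.

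The main obstacle is the direct implication, specifically upgrading observability (positive measure of $A_\varepsilon(\mu_i)$ for all $\varepsilon>0$) to the genuine SRB property (positive measure of the exact basin $A(\mu_i)$). This step relies essentially on the connectedness of $p\omega(x)$: it is what collapses $p\omega(x)$ to a singleton on a full-measure set once $\mathcal{O}$ is finite, and thereby traps the $\varepsilon$-basins inside the exact basins. Everything else reduces to a direct application of the minimality theorem and the trivial inclusion of SRB measures into $\mathcal{O}$.
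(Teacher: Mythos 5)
Your proof is correct, and it reaches the conclusion by a noticeably different route in the one nontrivial step. The converse implication and the identification $\mathcal{O}=\mbox{SRB}$ are handled exactly as in the paper (minimality from Theorem \ref{toeremaminimal0} plus the trivial inclusion $\mbox{SRB}\subset\mathcal{O}$). The difference is in upgrading observability to the SRB property. The paper isolates this as Lemma \ref{lemmaObservablesAisladas}: any measure \emph{isolated} in $\mathcal{O}$ is SRB, proved by a covering argument --- the annulus $\overline{{\mathcal B}_{\varepsilon_0}(\mu)}\setminus{\mathcal B}_\varepsilon(\mu)$ is covered by finitely many balls whose $\varepsilon$-basins are Lebesgue-null, and the convex-like property (Theorem \ref{teoremaconvexo}) then traps $p\omega(x)$ inside ${\mathcal B}_\varepsilon(\mu)$ for a.e.\ $x\in A_{\varepsilon_0}(\mu)$. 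You instead invoke Theorem \ref{toeremaminimal0} to get $p\omega(x)\subset\mathcal{O}$ a.e., use connectedness of $p\omega(x)$ (your justification via the vanishing gap $\sigma_{n+1}-\sigma_n$ is the standard one and plays the same role as Theorem \ref{teoremaconvexo}, whose part 2 --- singleton or uncountable --- would serve equally well) to collapse $p\omega(x)$ to a singleton $\{\mu_{i(x)}\}$ a.e., and then use the pairwise separation of the finitely many $\mu_i$ to show $A_\varepsilon(\mu_i)\subset A(\mu_i)$ modulo a null set. Your argument is more economical for the finite case, since it avoids the covering lemma entirely; the price is that it uses finiteness of $\mathcal{O}$ in an essential way (total disconnectedness of $\mathcal{O}$ and a uniform separation $\varepsilon_0$), whereas the paper's Lemma \ref{lemmaObservablesAisladas} applies to any isolated point of a possibly infinite $\mathcal{O}$ and is reused verbatim in the proof of Theorem \ref{teoremaOcountableinfinite}. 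Both approaches rest on the same two pillars: the full optimal attraction of $\mathcal{O}$ and the connectedness-type structure of $p\omega(x)$.
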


We prove this theorem in Section \ref{SectionProofsTeoremasCountable}.

\begin{thm}  [Countable set of observable measures] \label{teoremaOcountableinfinite}
If $\mathcal O$ is countably infinite, then there exist countably infinitely many SRB measures such that
their basins of attraction cover $M$ Lebesgue a.e. In this case   $\mathcal O$ is the weak$^*$-closure of the set of SRB measures.
\end{thm}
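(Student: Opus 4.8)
The plan is to exploit the fact that countability of $\mathcal O$ forces a strong topological rigidity on the limit sets $p\omega(x)$, which collapses them to single points almost everywhere. I would begin by recording two elementary but decisive observations. First, writing $e_n(x)=\frac1n\sum_{j=0}^{n-1}\delta_{f^j(x)}$ for the empiric probabilities, one has $e_{n+1}(x)-e_n(x)=\frac{1}{n+1}\bigl(\delta_{f^n(x)}-e_n(x)\bigr)$, so that for any $\phi$ in the unit ball of $C(M,\mathbb{R})$ the difference $|\int\phi\,d e_{n+1}(x)-\int\phi\,d e_n(x)|\le \frac{2}{n+1}$; hence $d(e_{n+1}(x),e_n(x))\to 0$ for any metric $d$ inducing the weak$^*$ topology. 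By the classical fact that the set of accumulation points of a sequence with vanishing consecutive gaps in a compact metric space is connected, $p\omega(x)$ is a nonempty compact \emph{connected} subset of $\mathcal P$ for every $x$. Second, a countable metric space is totally disconnected: given $\mu\ne\nu$ in $\mathcal O$, the countable value set $\{d(\mu,p):p\in\mathcal O\}$ omits some $r\in(0,d(\mu,\nu))$, and then $\{p\in\mathcal O:d(\mu,p)<r\}$ is clopen in $\mathcal O$ and separates $\mu$ from $\nu$.

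Combining these with Theorem \ref{toeremaminimal0}, which guarantees that the basin $A(\mathcal O)$ has total Lebesgue measure, I would argue as follows. For Lebesgue-almost every $x$ we have $p\omega(x)\subseteq\mathcal O$; since $p\omega(x)$ is connected while $\mathcal O$ is totally disconnected, $p\omega(x)$ must be a singleton $\{\mu_x\}$ with $\mu_x\in\mathcal O$, for a.e. $x$. Thus, modulo a null set, $M=\bigsqcup_{\mu\in\mathcal O}A(\mu)$, where $A(\mu)=\{x:p\omega(x)=\{\mu\}\}$ is the basin of Definition \ref{Defphysical} (these sets are pairwise disjoint). Let $S=\{\mu\in\mathcal O:m(A(\mu))>0\}$ be the set of SRB measures. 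Because $\mathcal O$ is countable, $\bigcup_{\mu\in\mathcal O\setminus S}A(\mu)$ is a countable union of null sets, hence null, so $\bigcup_{\mu\in S}A(\mu)$ has total Lebesgue measure: the basins of the SRB measures cover $M$ a.e. Moreover $S$ must be infinite, for if $S$ were finite then finitely many SRB measures would cover $M$ a.e., and the ``if'' direction of Theorem \ref{teoremaOfinito} would force $\mathcal O$ to be finite, contradicting the hypothesis. Hence $S$ is countably infinite, which is the first assertion.

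To finish I would identify $\mathcal O$ with the weak$^*$-closure $\overline S$. Since $S\subseteq\mathcal O$ and $\mathcal O$ is weak$^*$-compact, $\overline S\subseteq\mathcal O$. For the reverse inclusion, fix $\mu\in\mathcal O$ and $\varepsilon>0$: by Definition \ref{definicionobservable} the set $A_\varepsilon(\mu)$ has positive Lebesgue measure, so after discarding the null set on which $p\omega(x)$ is not a singleton of $S$, I may choose $x\in A_\varepsilon(\mu)$ with $p\omega(x)=\{\mu_x\}$, $\mu_x\in S$, and $\mathrm{dist}(p\omega(x),\mu)=d(\mu_x,\mu)<\varepsilon$. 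Thus $S$ meets every $\varepsilon$-ball about $\mu$, so $\mu\in\overline S$ and $\mathcal O=\overline S$.

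The hard part, and the step deserving the most care, is the passage from ``$p\omega(x)\subseteq\mathcal O$'' to ``$p\omega(x)$ is a singleton'' for a.e. $x$: this is exactly where the connectedness of $p\omega(x)$ must be played against the total disconnectedness of the countable set $\mathcal O$, and where one must verify that the exceptional set — points with $p\omega(x)\not\subseteq\mathcal O$, together with those landing in the null basins — is genuinely Lebesgue-null. Once the almost-everywhere decomposition $M=\bigsqcup_{\mu\in S}A(\mu)$ (mod $0$) is secured, both the covering statement and the density of $S$ in $\mathcal O$ follow directly from the definitions.
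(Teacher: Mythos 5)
Your proof is correct, but it follows a genuinely different route from the paper's in two of its three main steps. For the reduction of $p\omega(x)$ to a singleton for Lebesgue-a.e.\ $x$, the paper invokes its Theorem~\ref{teoremaconvexo} (the convex-like property: $p\omega(x)$ is either a single point or uncountable) together with countability of $\mathcal O$, whereas you use connectedness of the cluster set of a sequence with vanishing consecutive gaps against the total disconnectedness of a countable metric space; these are two faces of the same phenomenon (the paper's convexity lemma is essentially a quantitative connectedness statement, proved from the very same $O(1/n)$ gap bound you record), and both are valid. The covering argument itself --- decomposing $M$ mod~$0$ into the countably many disjoint basins $A(\mu)$, summing to $1$, and discarding the null atoms --- is essentially identical to the paper's. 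Where you diverge more substantially is in producing the countably infinite family of SRB measures and the identity $\mathcal O=\overline{\mbox{SRB}}$: the paper routes this through Lemma~\ref{lemmaObservablesNumerable} (a countable compact set has a countably infinite dense set of isolated points, via the perfect-set theorem for Polish spaces) and Lemma~\ref{lemmaObservablesAisladas} (isolated observable measures are SRB), while you obtain infiniteness of the SRB set by contradiction from Theorem~\ref{teoremaOfinito} and density directly from Definition~\ref{definicionobservable} applied to $A_\varepsilon(\mu)$ after discarding the null exceptional set. Your version is more self-contained --- it avoids descriptive set theory and both auxiliary lemmas entirely --- at the cost of not isolating the structural fact, of independent interest and reused elsewhere in the paper, that every isolated point of $\mathcal O$ is an SRB measure. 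One small point of care: your density argument implicitly uses that $\mbox{dist}(p\omega(x),\mu)=d(\mu_x,\mu)$ when $p\omega(x)=\{\mu_x\}$ and that the exceptional set (non-singleton limit sets plus null basins) is Lebesgue-null so that it cannot exhaust $A_\varepsilon(\mu)$; you flag both, and both are fine.
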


 We prove this theorem in Section \ref{SectionProofsTeoremasCountable}.

\vspace{.3cm}

 For systems preserving the Lebesgue measure the main question is
 their ergodicity, and most results of this work translate, for those
 systems, as equivalent conditions to be ergodic.
 The proof of the following result is  standard after Theorem \ref{toeremaminimal0}:

\begin{rem}
\label{teoremaLebesgue0}   {\bf (Observability and
ergodicity.)}
If $f$ preserves the Lebesgue measure $m$ then the following
assertions are equivalent:

\em 1. \em   $f$ is ergodic respect to $m$.

\em 2. \em  There exists a unique observable measure $\mu$ for
$f$.

\em 3. \em   There exists a unique SRB measure $\nu $ for $f$
attracting Lebesgue a.e.

Moreover, if  the assertions above are satisfied, then $m = \mu =
\nu$

\end{rem}

  The ergodicity
 of most maps that preserve the Lebesgue measure is also an open question. (\cite{pughshub2}, \cite{bonattivianawilk}).
Due to Remark \ref{teoremaLebesgue0} this open question  is equivalent to the \em unique observability. \em

\section{The convex-like property of $p\omega(x)$.}

  For each $x \in M$ we have defined the nonempty compact set $p\omega(x) \subset {\mathcal P}_f$ composed by   the limits of all the convergent subsequences of the empiric probabilities in Equality (\ref{ec1}). For further uses we state the following property for the
$p\omega$-limit sets:

\begin{thm} \label{teoremaconvexo} {\bf (Convex-like
property.)}
For every point $x \in M$:

\noindent \em  1. \em If $\mu, \nu \in p\omega(x)$  then for each real number $ 0
\leq \lambda \leq 1$ there exists a measure $\mu_{\lambda} \in
p\omega (x)$ such that $\dist(\mu_{\lambda}, \mu) = \lambda
\dist(\nu, \mu).$

\noindent \em 2.  \em $p\omega(x)$ either has a single element or is uncountable.

\end{thm}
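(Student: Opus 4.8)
The plan is to exploit a single structural feature of the empiric probabilities: consecutive terms become arbitrarily close in $\mathcal{P}$. Writing $\sigma_n := \frac1n \sum_{j=0}^{n-1}\delta_{f^j(x)}$ for the $n$-th term of (\ref{ec1}), a direct computation gives $\sigma_{n+1} - \sigma_n = \frac{1}{n+1}\bigl(\delta_{f^n(x)} - \sigma_n\bigr)$, so that for every continuous $\phi$ with $\|\phi\|_\infty \le 1$ one has $\bigl|\int \phi\, d\sigma_{n+1} - \int \phi\, d\sigma_n\bigr| \le \frac{2}{n+1}$. Evaluating this on a fixed countable family of test functions that metrizes the weak$^*$ topology shows that $d_0(\sigma_{n+1},\sigma_n) \to 0$ for a standard metric $d_0$ inducing that topology. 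Since $\mathcal{P}$ is weak$^*$-compact, any two metrics inducing its topology are uniformly equivalent (the identity map of a compact space onto itself is a uniform homeomorphism), so the same holds for the metric $\dist$ fixed in the paper:
\begin{equation}\nonumber
\dist(\sigma_{n+1},\sigma_n) \longrightarrow 0 \qquad (n\to\infty).
\end{equation}

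To prove item 1, fix $\mu,\nu \in p\omega(x)$ and set $D := \dist(\nu,\mu)$. I consider the bounded real sequence $a_n := \dist(\sigma_n,\mu)$. By the triangle inequality, $|a_{n+1}-a_n| \le \dist(\sigma_{n+1},\sigma_n) \to 0$, so $(a_n)$ has vanishing increments. Since $\mu,\nu \in p\omega(x)$, there are subsequences of $(\sigma_n)$ converging to $\mu$ and to $\nu$; by continuity of $\dist(\cdot,\mu)$ the corresponding subsequences of $(a_n)$ converge to $0$ and to $D$, so both $0$ and $D$ are limit points of $(a_n)$. I would then invoke the elementary fact that the set of subsequential limits of a bounded real sequence with vanishing increments is a closed interval: given a target value $c$ strictly between two limit points $\alpha<c<\beta$, fix $\epsilon>0$ smaller than $\min(c-\alpha,\beta-c)$, choose $N$ beyond which all increments satisfy $|a_{n+1}-a_n|<\epsilon$, and observe that between an index $n_1\ge N$ with $a_{n_1}<c$ and a later index $n_2$ with $a_{n_2}>c$ the sequence must take a value within $\epsilon$ of $c$; letting $\epsilon\to0$ yields a subsequence converging to $c$. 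Applying this with $c=\lambda D\in[0,D]$, I obtain a subsequence $(\sigma_{p_i})$ with $\dist(\sigma_{p_i},\mu)\to \lambda D$; passing to a further subsequence by weak$^*$-compactness of $\mathcal{P}$, I may assume $\sigma_{p_i}\to \mu_\lambda$. Then $\mu_\lambda \in p\omega(x)$ by definition, and continuity of the distance gives $\dist(\mu_\lambda,\mu)=\lambda D=\lambda\,\dist(\nu,\mu)$, as required.

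Item 2 is then immediate. If $p\omega(x)$ is not a singleton, pick $\mu\neq\nu$ in it, so that $D=\dist(\nu,\mu)>0$. By item 1 there is, for each $\lambda\in[0,1]$, a measure $\mu_\lambda\in p\omega(x)$ with $\dist(\mu_\lambda,\mu)=\lambda D$. Distinct values of $\lambda$ force distinct values of $\dist(\mu_\lambda,\mu)$ (since $D>0$), hence distinct measures $\mu_\lambda$; thus $\lambda\mapsto\mu_\lambda$ is an injection of the uncountable set $[0,1]$ into $p\omega(x)$, which is therefore uncountable.

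The routine parts are the increment computation and the injection in item 2. The step I expect to carry the real weight is the combination in item 1: the vanishing-increment estimate for $\dist(\sigma_{n+1},\sigma_n)$ together with the discrete intermediate-value argument for $(a_n)$. The only genuine subtlety is that the increment estimate must be made independent of the particular metric chosen for $\mathcal{P}$, which is exactly where weak$^*$-compactness, and the resulting uniform equivalence of metrics, enters.
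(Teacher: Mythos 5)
Your proof is correct and follows essentially the same route as the paper's: both rest on the observation that consecutive empiric measures are $O(1/n)$-close in the weak$^*$ metric, so the real sequence $\dist(\sigma_n,\mu)$ has vanishing increments and its set of subsequential limits is an interval containing $0$ and $\dist(\nu,\mu)$, from which a discrete intermediate-value argument produces the required subsequence. The only differences are organizational: the paper (Lemma \ref{ass1}) runs the intermediate-value step between two explicit anchor indices $m_j<n_j$ with $\varepsilon/4$ triangle-inequality bookkeeping and works with one concrete metric, whereas you apply the abstract limit-set lemma directly to $a_n=\dist(\sigma_n,\mu)$ and add the (correct and worthwhile) remark that compactness of $\mathcal{P}$ makes the vanishing-increment estimate independent of the chosen metric.
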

\begin{proof}

 The statement 2 is an immediate consequence of  1.
To prove  1
it is enough to exhibit, in the case  $\mu \neq \nu$, a
convergent subsequence
 of (\ref{ec1}) whose limit $\mu_{\lambda } $ satisfies 1. It is an easy exercise to observe that the existence of such convergent sequence follows (just taking $\varepsilon = 1/n$) from the following lemma \ref{ass1}. \end{proof}

\begin{lem} \label{ass1} For fixed $x \in M$ and for all $n \geq 1$ denote $\mu_n = \frac{1}{n}  \sum_{j= 0}^{n-1} \delta _{f^j(x)}.$  Assume that there exist two weak$^*$-convergent subsequences $\mu_{m_j} \rightarrow \mu, \; \; \mu_{n_j} \rightarrow \nu.$
Then, for all $\varepsilon >0 $ and all $K >0$ there exists
 a natural number $h = h(\varepsilon, K)>K$ such that \em
  $|\dist (\mu_h, \mu) -\lambda \dist (\nu, \mu)| \leq \varepsilon.$
 \end{lem}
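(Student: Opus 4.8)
The plan is to reduce the statement to a discrete intermediate value argument for the real sequence $g(n) := \dist(\mu_n, \mu)$. By hypothesis $\mu_{m_j} \to \mu$ and $\mu_{n_j} \to \nu$, so by continuity of the metric $g(m_j) \to \dist(\mu,\mu) = 0$ and $g(n_j) \to \dist(\nu,\mu) =: D$. The target value $\lambda D$ lies in $[0, D]$, and the conclusion asks for an index $h > K$ with $g(h)$ within $\varepsilon$ of this intermediate value. Since $g$ attains values arbitrarily close to the left endpoint $0$ (along $m_j$) and to the right endpoint $D$ (along $n_j$) at arbitrarily large indices, it suffices to control how much $g$ can change between consecutive indices.

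The key step is the estimate $\dist(\mu_{n+1}, \mu_n) \to 0$ as $n \to \infty$. This follows from the identity $\mu_{n+1} = \frac{n}{n+1}\,\mu_n + \frac{1}{n+1}\,\delta_{f^n(x)}$: for any $\phi$ with $\|\phi\|_\infty \le 1$ one has $|\int \phi\, d\mu_{n+1} - \int \phi\, d\mu_n| = \frac{1}{n+1}\,|\phi(f^n(x)) - \int \phi\, d\mu_n| \le \frac{2}{n+1}$. For concreteness I would fix the standard metric $\dist(\alpha,\beta) = \sum_k 2^{-k}\,|\int \phi_k\, d\alpha - \int \phi_k\, d\beta|$ built from a countable dense family $\{\phi_k\}$ in the unit ball of $C^0(M,\mathbb{R})$, which then gives $\dist(\mu_{n+1},\mu_n) \le 2/(n+1)$. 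The triangle inequality yields $|g(n+1) - g(n)| \le \dist(\mu_{n+1},\mu_n) \to 0$; in particular, given $\varepsilon > 0$ I fix $N_0$ so that $|g(n+1) - g(n)| < \varepsilon$ for all $n \ge N_0$.

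Now I would carry out the crossing argument. The cases $\lambda D = 0$ and $\lambda D = D$ are immediate: take $h = m_j$ (respectively $h = n_j$) with $j$ large, so that $h > K$ and $g(h)$ is within $\varepsilon$ of $0$ (respectively of $D$). For $0 < \lambda D < D$, using $g(m_j) \to 0$ and $g(n_j) \to D$ I choose indices $a, b > \max(K, N_0)$ with $g(a) < \lambda D < g(b)$. Assuming $a < b$ (the case $b < a$ is symmetric), let $h$ be the least index in $\{a, a+1, \dots, b\}$ with $g(h) \ge \lambda D$; then $h > a$, and $g(h-1) < \lambda D \le g(h)$ together with $|g(h) - g(h-1)| < \varepsilon$ gives $\lambda D \le g(h) < \lambda D + \varepsilon$, that is $|g(h) - \lambda D| < \varepsilon$. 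Finally $h > \min(a,b) > K$, as required.

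The only genuinely quantitative ingredient is the step-size bound $\dist(\mu_{n+1},\mu_n) \to 0$; everything else is elementary. I expect the main (minor) obstacle to be metric-dependence: the clean bound $2/(n+1)$ uses the explicit series metric, but the argument needs only the qualitative fact that the jumps tend to zero. This holds for any metric inducing the weak$^*$ topology, since $\mu_{n+1}$ is the convex combination of $\mu_n$ and the single atom $\delta_{f^n(x)}$ with weight $1/(n+1) \to 0$, so $\|\mu_{n+1}-\mu_n\|_{TV} \le 2/(n+1)$ and total variation convergence dominates weak$^*$ convergence.
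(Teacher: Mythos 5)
Your proof is correct and follows essentially the same strategy as the paper's: both rest on the step bound $\dist(\mu_{n+1},\mu_n)=O(1/n)$ for the explicit series metric, combined with a discrete intermediate-value (crossing) argument along indices lying between a far-out $m_j$ and a far-out $n_j$. The only difference is cosmetic: you run the crossing argument directly on $n\mapsto\dist(\mu_n,\mu)$, whereas the paper runs it on $k\mapsto\dist(\mu_{m_j},\mu_{m_j+k})$ and then transfers the estimate to $\dist(\mu_h,\mu)$ by a chain of triangle inequalities, which your version avoids.
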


\begin{proof}
First let us choose $m_j$ and then $n_j$ such that
\begin{equation}
\nonumber
m_j >K; \;\; \; \frac{1}{m_j} < \frac{\varepsilon}{4}; \;\; \;  \dist (\mu, \mu_{m_j}) < \frac{\varepsilon}{4}; \; \;\;
  n_j > m_j; \; \;\; \dist (\nu, \mu_{n_j}) < \frac{\varepsilon}{4}.\end{equation}
We will consider  the following distance in $\mathcal P$: $$\dist (\rho, \delta) =
  \sum_{i=1}^{\infty}\frac {1}{2^i} \; \left| \int g_i \, d \rho
  - \int g_i \, d \delta \; \right|$$ for any $\rho, \delta \in {\mathcal P}$,
  where $\{g_i\}_{i \in \mathbb{N}}$ is a countable dense subset of $C^0(M, [0,1])$.
Note from the sequence (\ref{ec1}) that $|\int g \, d \mu _n - \int g \, d \mu
 _{n+1}| \leq (1/n) ||g||$ for all $g \in C(M, [0,1])$ and all $n \geq
 1$. Then in particular for $n = m_j + k$, we
 obtain
 \begin{equation}
\label{ec110} \dist(\mu_{m_j + k}, \mu _{m_j + k + 1})  \leq \frac{1}{m_j} < \frac{\varepsilon }{4} \;
 \;\; \mbox { for all }
 k \geq 0
  \end{equation}
Now let us choose a natural number $0 \leq k \leq n_j - m_j$ such
that
$$
\left|\dist(\mu_{m_j}, \mu _{m_j + k})- \lambda \dist(\mu_{m_j},
\mu _{n_j}) \right|< \varepsilon /4 \;\;\mbox{ for the given }
\lambda \in [0,1]$$ Such $k$ does exist because inequality (\ref{ec110}) is
verified for all $k \geq 0$ and moreover
if $ k= 0 $ then $ \dist(\mu_{m_j}, \mu _{m_j + k})= 0 $
 and if  $ k= n_j -m_j $ then $  \dist(\mu_{m_j}, \mu _{m_j + k})=
\dist(\mu_{m_j}, \mu _{n_j }).$ Now renaming $h = m_j + k$,
 applying the triangular
property and tying together the inequalities above, we deduce:
$$
\left| \dist(\mu_{h}, \mu) - \lambda \dist (\nu, \mu)\right| \leq \left| \dist(\mu_{h}, \mu) -
\dist(\mu_{h}, \mu_{m_j})\right|  $$ $$+
 \left| \dist(\mu_{h}, \mu_{m_j}) - \lambda \dist (\mu_{m_j},
\mu_{n_j})\right|+   \lambda \left| \dist
(\mu_{m_j}, \mu_{n_j}) - \dist (\mu_{m_j}, \nu)\right|$$ $$ + \lambda
\left| \dist (\mu_{m_j}, \nu) - \dist (\mu, \nu)\right| < \varepsilon
 $$ \end{proof}

\section{Proof of Theorems \ref{teoremaexistencia0} and \ref{toeremaminimal0}.} \label{proofsMainTheorems}

From the beginning we have fixed a  metric in the space $\mathcal P$ of all Borel probability measures in $M$, inducing its weak$^*$ topology
structure. We denote as $B_\varepsilon (\mu)$   the open ball in
$\mathcal P$, with such a metric,  centered in $\mu \in \mathcal P$ and with radius $\varepsilon
>0$.

 \vspace{.3cm}

\begin{proof}
 ({\em  of Theorem } \ref{teoremaexistencia0}.)   Let us prove that ${\mathcal O}  $ is compact. The
complement ${\mathcal O}^c$ of $\mathcal O$ in $\mathcal P$ is the set of all
probability measures $\mu$ (not necessarily $f$-invariant) such
that for some $\varepsilon = \varepsilon (\mu) >0$ the set $\{x \in M:
p\omega(x) \cap B_{\varepsilon }(\mu) \neq \emptyset\}$ has zero
Lebesgue measure. Therefore ${\mathcal O}^c$ is open  in $\mathcal P$, and
${\mathcal O}$ is a closed subspace of  $\mathcal P$. As $\mathcal P$ is
compact we deduce that  $\mathcal O$ is compact as wanted.

We now prove that $\mathcal O$ is not empty. By
contradiction, assume that ${\mathcal O}^c = {\mathcal P}$. Then
for every $\mu \in {\mathcal P}$ there exists some $\varepsilon =
\varepsilon (\mu) >0$ such that the set $A= \{x \in M: p\omega(x)
\subset (B_{\varepsilon }(\mu))^c \}$ has total  Lebesgue
probability.
As $\mathcal P$ is compact, let us consider a finite covering of $\mathcal
P$ with such open balls $B_{\varepsilon }(\mu)$, say $B_1, B_2,
\ldots B_k$, and their respective sets $A_1, A_2, \ldots A_k$
defined as above. As $m(A_i) = 1$ for all $i= 1, 2, \ldots, k$ we
have that the intersection $B= \cap_{i=1}^k A_i$ is not empty. By
construction, for all $x \in B$ the $p\omega$-limit of $x$ is
contained in the complement of  $B_i$ for all $i = 1, 2 \ldots,
k$, and so it would not be contained in $\mathcal P$, that is the
contradiction ending the proof.
\end{proof}

\begin{proof} ({\em  of Theorem} \ref{toeremaminimal0}.)
Recall Definition \ref{definitionBasinOfK} of the basin of attraction
$A({\mathcal K})$ of any weak$^*$-compact and nonempty set ${\mathcal K}$ of probabi\-lities.
We must prove the following two assertions:

\noindent $1. \  $ $m(A({\mathcal O})) = 1$, where $m$ is the Lebesgue measure.

\noindent $2. \ $ ${\mathcal O}$ is minimal among all the compact sets ${\mathcal K} \subset {\mathcal P} $ with such a property.

Define the following family $\aleph$ of sets of probabilities:
$$\aleph= \{ {\mathcal K} \subset {\mathcal P} : \, {\mathcal K} \mbox{ is compact  and }  m(A({\mathcal K})) = 1 \}.$$
Therefore $\aleph$ is composed by all the weak$^*$ compact sets ${\mathcal K}$ of probabilities such that
 $p\omega (x) \subset {\mathcal K}$
for Lebesgue almost
every point
$x \in M  $. The family $\aleph$ is not empty since it contains the set ${\mathcal P}_f$ of all the invariant probabilities. So, to prove Theorem \ref{toeremaminimal0}, we must prove that ${\mathcal O} \in \aleph$ and ${\mathcal O} = \bigcap_{{\mathcal K} \in \aleph} {\mathcal K}$.

  \label{pruebaGenErgAttract}

Let us first prove that ${\mathcal O} \subset {\mathcal K}$ for all ${\mathcal K} \in \aleph$. This is equivalent to prove that if ${\mathcal K} \in \aleph$ and if $\mu \not \in {\mathcal K}$ then $\mu \not \in {\mathcal O}$.

If $\mu \not \in {\mathcal K}$ take $\varepsilon = \dist (\mu, {\mathcal
K})>0$.  For all $x \in A({\mathcal K})$ the set $p\omega (x)\subset
{\mathcal K} $ is disjoint from the ball $B_{\varepsilon }(\mu)$. But
almost all Lebesgue point is in $  A({\mathcal K})$, because ${\mathcal K}
\in \aleph$. Therefore $p\omega (x) \cap B_{\varepsilon }(\mu) =
\emptyset$ Lebesgue a.e. This last assertion, combined with  Definition
$\ref{definicionobservable}$ and the compactness of the set $p\omega(x)$
imply that $\mu \not \in {\mathcal O}$, as wanted.

Now let us prove that $m(A({\mathcal O})) =1.$  After Theorem \ref{teoremaexistencia0} the set $\mathcal O$ is compact and nonempty. So,
for any $\mu \not \in {\mathcal O}$ the distance $\dist
(\mu, {\mathcal O})$ is positive.  Observe that the complement  ${\mathcal O}^c$ of
$\mathcal O$ in $\mathcal P$ can be written as the increasing union of
compacts sets ${\mathcal K}_n$ (not in the family $\aleph $) as
follows:\begin{equation}\label{ec3}
{\mathcal O}^c = \bigcup
_{n=1}^{\infty} {\mathcal K}_n, \;\;\;\;\;\;\;\;{\mathcal K}_n = \{\mu \in
{\mathcal P}: \dist (\mu, {\mathcal O}) \geq 1/n \} \; \subset  \; {\mathcal
K}_{n+1}
\end{equation}  Let us consider the   sequence $A'_n= A'({\mathcal K}_n)$ of sets
in $M$, where $A'({\mathcal K})$ is defined as follows:
\begin{equation}
\label{ec111} A'({\mathcal K}): = \{x \in M: p\omega(x)\cap {\mathcal K} \neq
\emptyset \}.\end{equation}
 Denote
$A'_{\infty}= A'({\mathcal O}^c)$. We deduce from (\ref{ec3}) and (\ref{ec111})  that:
$$A'_{\infty } = \bigcup _{n=1}^{\infty} A'_n, \;\;\;\; m(A'_n)\rightarrow m(A'_{\infty})
 = m(A'({\mathcal O}^c)).$$ To end the proof is now enough to
show that $m(A'_n)=0$ for all $n \in \mathbb{N}$.

In fact,  $A'_n= A'({\mathcal K}_n)$ and ${\mathcal K}_n$ is compact and
contained in ${\mathcal O}^c$. By Definition
\ref{definicionobservable}  there
exists a finite covering of ${\mathcal K}_n$ with open balls ${\mathcal
B}_1, {\mathcal B}_2, \ldots, {\mathcal B}_k$ such that
\begin{equation}\label{ec4}
 m(A'({\mathcal B}_i))= 0 \;\;\; \mbox{for all } i = 1, 2, \ldots , k
 \end{equation}
 By (\ref{ec111})  the
finite collection of sets $A'({\mathcal B}_i); \; i= 1,2, \ldots, k$
cover $A'_n$  and therefore (\ref{ec4}) implies $m(A'_n) = 0$ ending the
proof. \end{proof}

\section{Proof of Theorems \ref{teoremaOfinito} and \ref{teoremaOcountableinfinite}} \label{SectionProofsTeoremasCountable}


\begin{lem}
\label{lemmaObservablesAisladas}
If an observable or SRB-like measure $\mu$  is isolated in the set ${\mathcal O}$ of all observable measures, then it is an SRB measure.
\end{lem}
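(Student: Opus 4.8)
The plan is to unpack the hypothesis that $\mu$ is isolated in ${\mathcal O}$ into a quantitative statement and then combine it with the full-attraction Theorem \ref{toeremaminimal0} and the convex-like property of Theorem \ref{teoremaconvexo} to show that $p\omega(x) = \{\mu\}$ for a positive-measure set of points $x$. Since $\mu$ is isolated in ${\mathcal O}$, I would fix $\varepsilon_0 > 0$ so small that $B_{\varepsilon_0}(\mu) \cap {\mathcal O} = \{\mu\}$; that is, $\mu$ is the only observable measure at distance less than $\varepsilon_0$ from itself. My target is $m(A(\mu)) > 0$ for $A(\mu) = \{x : p\omega(x) = \{\mu\}\}$, since that is precisely Definition \ref{Defphysical} of an SRB measure. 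The natural candidate for a positive-measure subset of $A(\mu)$ is $A_{\varepsilon_0/2}(\mu)$, which has positive Lebesgue measure because $\mu$ is observable (Definition \ref{definicionobservable}).

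Next I would intersect $A_{\varepsilon_0/2}(\mu)$ with the full-measure set $G$ of points for which $p\omega(x) \subset {\mathcal O}$, provided by Theorem \ref{toeremaminimal0}. The core is the pointwise claim: \emph{for every $x \in A_{\varepsilon_0/2}(\mu) \cap G$ one has $p\omega(x) = \{\mu\}$}. First, since $x \in A_{\varepsilon_0/2}(\mu)$ there is some $\mu' \in p\omega(x)$ with $\dist(\mu', \mu) < \varepsilon_0/2 < \varepsilon_0$; as $x \in G$ this $\mu'$ lies in ${\mathcal O} \cap B_{\varepsilon_0}(\mu) = \{\mu\}$, so in fact $\mu \in p\omega(x)$. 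It then remains only to exclude that $p\omega(x)$ contains a second measure $\nu' \neq \mu$.

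The main obstacle is exactly this exclusion, and it is where the convex-like property is indispensable: if $\nu'$ lay far from $\mu$ (distance $\geq \varepsilon_0$) I could conclude nothing directly, and since $p\omega(x)$ need not be connected I cannot simply run an intermediate-value argument on the continuous function $\rho \mapsto \dist(\rho, \mu)$ over $p\omega(x)$. Instead I would invoke Theorem \ref{teoremaconvexo}(1) with base point $\mu$ and endpoint $\nu'$: writing $d = \dist(\nu', \mu) > 0$, it yields for every $\lambda \in [0,1]$ a measure $\mu_\lambda \in p\omega(x)$ with $\dist(\mu_\lambda, \mu) = \lambda d$, so the set of distances from $p\omega(x)$ to $\mu$ covers the whole interval $[0, d]$. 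Choosing the target radius $r = \min(d, \varepsilon_0)/2 \in (0, \varepsilon_0)$, I obtain $\rho \in p\omega(x)$ with $\dist(\rho, \mu) = r$; then $\rho \in {\mathcal O}$ (because $x \in G$), $\rho \in B_{\varepsilon_0}(\mu)$, and $\rho \neq \mu$, contradicting $B_{\varepsilon_0}(\mu) \cap {\mathcal O} = \{\mu\}$. Hence no such $\nu'$ exists and $p\omega(x) = \{\mu\}$, proving the claim.

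Finally I would assemble the pieces: the claim gives $A_{\varepsilon_0/2}(\mu) \cap G \subset A(\mu)$, and since $G$ has full measure while $m(A_{\varepsilon_0/2}(\mu)) > 0$, it follows that $m(A(\mu)) \geq m(A_{\varepsilon_0/2}(\mu) \cap G) = m(A_{\varepsilon_0/2}(\mu)) > 0$. By Definition \ref{Defphysical} this makes $\mu$ an SRB measure, completing the proof. I expect the only delicate point to be the bookkeeping of the radii ($\varepsilon_0$ versus $\varepsilon_0/2$ versus $r$), ensuring the interpolated measure lands strictly inside the punctured ball $B_{\varepsilon_0}(\mu) \setminus \{\mu\}$; the rest is a routine combination of observability, full attraction, and the convex-like property.
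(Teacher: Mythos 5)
Your proof is correct, but it takes a genuinely different route from the paper's. The paper does not invoke Theorem \ref{toeremaminimal0} here; instead it inlines a local covering argument: for each $0<\varepsilon<\varepsilon_0$ it covers the compact annulus $\overline{{\mathcal B}_{\varepsilon_0}(\mu)}\setminus {\mathcal B}_{\varepsilon}(\mu)$ (disjoint from ${\mathcal O}$) by finitely many balls whose basins are Lebesgue-null, concludes that for a.e.\ $x\in A_{\varepsilon_0}(\mu)$ the set $p\omega(x)$ meets ${\mathcal B}_{\varepsilon}(\mu)$ and misses the annulus, uses the convex-like property to force $p\omega(x)\subset {\mathcal B}_{\varepsilon}(\mu)$, and finally lets $\varepsilon=1/n\to 0$. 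You instead use Theorem \ref{toeremaminimal0} as a black box to restrict to the full-measure set $G$ where $p\omega(x)\subset{\mathcal O}$, first establish that $\mu$ itself belongs to $p\omega(x)$, and then anchor the interpolation of Theorem \ref{teoremaconvexo}(1) at $\mu$, so that the interpolated measure lands at a prescribed distance $r\in(0,\varepsilon_0)$ from $\mu$ and immediately violates isolation. Your version buys two things: it dispenses with the limiting procedure $\varepsilon_n=1/n$ entirely, and --- because the base point of the interpolation is $\mu\in p\omega(x)$ --- Theorem \ref{teoremaconvexo}(1) applies verbatim, whereas the paper's application of the convex-like property to rule out mass outside the annulus interpolates between two measures of $p\omega(x)$ neither of which is known to be $\mu$, which strictly speaking needs an extra triangle-inequality step. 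What the paper's version buys is independence from Theorem \ref{toeremaminimal0} (its covering argument is self-contained), though since that theorem is already proved by this point in the paper, your reliance on it introduces no circularity. Your bookkeeping of radii ($\varepsilon_0$, $\varepsilon_0/2$, $r=\min(d,\varepsilon_0)/2$) is sound.
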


\begin{proof}
Recall that we denote ${\mathcal B}_\varepsilon (\mu)$   the open ball in
$\mathcal P$ centered in $\mu \in \mathcal P$ and with radius $\varepsilon
>0$. Since $\mu $ is isolated in   ${\mathcal O}$, there exists $\varepsilon_0>0$ such that
the set $ \overline{ {\mathcal B}_{\varepsilon_0}(\mu)} \ \setminus \{\mu\}$ is disjoint from $ {\mathcal O} $.  After Definition \ref{definicionobservable}, \
$m(A) >0$, where $A:= A_{\varepsilon_0}(\mu) = \{x \in M: \ \mbox{dist}(p \omega(x), \mu) < \varepsilon_0\} .$

\noindent After Definition \ref{Defphysical}, to prove that $\mu$ is SRB it is enough to prove that for $m$-almost all $x \in A$ the limit set $p \omega(x)$ of the sequence (\ref{ec1}) of empiric probabilities, is $\{\mu\}$. In fact, fix and arbitrary $0 <\varepsilon < \varepsilon_{0}$. The compact set  $\overline{{\mathcal B}_{\varepsilon_0}(\mu)}\setminus {\mathcal B}_{\varepsilon}(\mu)$ is disjoint from ${\mathcal O}$, then it can be covered with a finite number of open balls ${\mathcal B}_1, {\mathcal B}_2, \ldots, {\mathcal B}_k$ such that $m(A_i) = 0$  for all $i = 1, \ldots, k$, where $A_i := \{x \in M: \ p\omega(x) \cap {\mathcal B}_i \neq \emptyset\}$. Thus, for $m$-a.e. $x \in A$ the limit set $p \omega(x)$ intersects $B_{\varepsilon}(\mu)$ but it does not intersect ${{\mathcal B}_{\varepsilon_0}(\mu)}\setminus {\mathcal B}_{\varepsilon}(\mu)$. After Theorem \ref{teoremaconvexo}, we conclude that $p\omega(x) \subset {\mathcal B}_{\varepsilon}(\mu)$ for Lebesgue almost all $x \in A$. Taking the values  $\varepsilon_n= 1/n$, for all $n \geq 1$, we deduce that $p\omega(x) = \{\mu\}$ for $m-$ a.e. $x \in A$, as wanted.
\end{proof}

\begin{proof} ({\em   of Theorem } \ref{teoremaOfinito}.)
Denote ${\mbox{SRB}}$ to the (a priori maybe empty) set of all SRB measures, according with Definition \ref{Defphysical}. It is immediate, after Definition \ref{definicionobservable}, that ${\mbox{SRB}} \subset {\mathcal O}$.
If ${\mathcal O}$ is finite, then all its measures are isolated, and after Lemma \ref{lemmaObservablesAisladas}, they are all $SRB$ measures. Therefore ${\mbox{SRB}} = {\mathcal O}$ is finite. Applying Theorem \ref{toeremaminimal0} which states the full attraction property of ${\mathcal O}$, it is obtained that $m(A({\mbox{SRB}}))= 1$, where $A({\mbox{SRB}}) = \bigcup_{\displaystyle{\mu} \in {\footnotesize {\mbox{SRB}}}} \, A(\mu)$,  being $A(\mu)$ the basin of attraction of the SRB measure $\mu$. Therefore, we conclude that, if ${\mathcal O}$ is finite, there exist a finite number of SRB measures such that the union of their basins cover Lebesgue almost all $x \in M$, as wanted.
Now, let us prove the converse statement. Assume that ${\mbox{SRB}}$ is finite and the union of the basins of attraction of all the measures in ${\mbox{SRB}}$ cover Lebesgue almost all $x \in M$. After the minimality property of   ${\mathcal O} $ stated in Theorem \ref{toeremaminimal0}, ${\mathcal O} \subset {\mbox{SRB}}$. On the other hand, we have ${\mbox{SRB}} \subset {\mathcal O}$. We deduce that ${\mathcal O} = {\mbox{SRB}}$, and thus ${\mathcal O}$ is finite, as wanted. \end{proof}

To prove Theorem \ref{teoremaOcountableinfinite} we need the following  Lemma (which in fact holds in any compact metric space ${\mathcal P}$).

\begin{lem} \label{lemmaObservablesNumerable}
 If the compact subset ${\mathcal O} \subset {\mathcal P}$ is countably infinite, then the subset ${\mathcal S}$ of its isolated points is not empty, countably infinite and $\overline{{\mathcal S}} = {\mathcal O}$.
Therefore, $    \mbox{\em dist}(\nu, {\mathcal O}) = \mbox{\em dist}(\nu, {\mathcal S}) $  for all $\nu \in {\mathcal P}$.
\end{lem}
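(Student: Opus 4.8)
The plan is to prove the three claims in order: first that the set $\mathcal{S}$ of isolated points of $\mathcal{O}$ is nonempty and infinite, then that $\overline{\mathcal{S}} = \mathcal{O}$, and finally to deduce the distance equality as a corollary. The entire argument lives in the compact metric space $\mathcal{P}$, and the key structural fact I would use is the Baire Category Theorem applied to the compact (hence complete) countable metric space $\mathcal{O}$.

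First I would show $\mathcal{S} \neq \emptyset$. Suppose for contradiction that $\mathcal{O}$ has no isolated points, i.e.\ $\mathcal{O}$ is perfect. Then for each $\mu \in \mathcal{O}$ the singleton $\{\mu\}$ is a closed set with empty interior in $\mathcal{O}$. Writing $\mathcal{O} = \bigcup_{\mu \in \mathcal{O}} \{\mu\}$ as a countable union of nowhere dense closed sets contradicts the Baire Category Theorem, since $\mathcal{O}$ is a nonempty complete metric space (being compact). Hence $\mathcal{S} \neq \emptyset$. I expect this Baire-category step to be the conceptual core of the lemma.

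Next I would prove $\overline{\mathcal{S}} = \mathcal{O}$ and that $\mathcal{S}$ is infinite. The inclusion $\overline{\mathcal{S}} \subset \mathcal{O}$ is automatic since $\mathcal{O}$ is closed and $\mathcal{S} \subset \mathcal{O}$. For the reverse inclusion, I would argue that $\mathcal{S}$ is dense in $\mathcal{O}$ by applying the same Baire argument inside any relatively open nonempty subset $U$ of $\mathcal{O}$: such a $U$ is itself a countable, locally compact (open in a compact space) Baire space, so it cannot be perfect, meaning it contains a point isolated in $U$; one checks this point is in fact isolated in all of $\mathcal{O}$ because $U$ is open in $\mathcal{O}$. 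Thus every nonempty relatively open set meets $\mathcal{S}$, giving density $\overline{\mathcal{S}} = \mathcal{O}$. Finiteness of $\mathcal{S}$ would force $\mathcal{O} = \overline{\mathcal{S}} = \mathcal{S}$ to be finite (a finite set is closed), contradicting that $\mathcal{O}$ is countably infinite; hence $\mathcal{S}$ is infinite, and being a subset of the countable set $\mathcal{O}$ it is countably infinite.

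Finally, the distance equality $\dist(\nu, \mathcal{O}) = \dist(\nu, \mathcal{S})$ for all $\nu \in \mathcal{P}$ follows from $\overline{\mathcal{S}} = \mathcal{O}$: distance to a set equals distance to its closure, so $\dist(\nu, \mathcal{S}) = \dist(\nu, \overline{\mathcal{S}}) = \dist(\nu, \mathcal{O})$. The main obstacle, if any, is making the density step clean — specifically verifying that a point isolated within a relatively open subset $U$ is genuinely isolated in $\mathcal{O}$ — but since $U$ is open in $\mathcal{O}$ this is routine, and the real content is the single invocation of Baire category to rule out perfect subsets of a countable compact space.
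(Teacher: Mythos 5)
Your proof is correct, and it reaches the same conclusions by a slightly different route than the paper. The common core is the observation that a countably infinite compact subset of $\mathcal P$ cannot be perfect; you derive this from the Baire Category Theorem (a nonempty compact, hence complete, countable metric space cannot be the countable union of its nowhere dense singletons), whereas the paper invokes the classical fact that a nonempty perfect subset of a Polish space has the cardinality of the continuum, citing Kechris. Your Baire argument is more elementary and self-contained. The logical order also differs: you first establish density of ${\mathcal S}$ in ${\mathcal O}$ by running the Baire argument inside an arbitrary nonempty relatively open $U \subset {\mathcal O}$ (correctly noting that a point isolated in $U$ is isolated in ${\mathcal O}$ because $U$ is open in ${\mathcal O}$), then get infiniteness of ${\mathcal S}$ and the distance identity $\dist(\nu,{\mathcal S})=\dist(\nu,\overline{{\mathcal S}})=\dist(\nu,{\mathcal O})$ as immediate corollaries. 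The paper instead proves infiniteness of ${\mathcal S}$ by showing that if ${\mathcal S}$ were finite then ${\mathcal O}\setminus{\mathcal S}$ would be a nonempty perfect compact set, and proves the distance equality directly by producing an isolated point in every small ball around a closest point of ${\mathcal O}$, deducing $\overline{{\mathcal S}}={\mathcal O}$ from that equality rather than the other way around. Your derivation of the final identity from density is cleaner; the paper's intermediate statement (that ${\mathcal O}$ contains no nonempty perfect subset) is marginally stronger and is what it reuses in its $\varepsilon$-argument. Both proofs are complete and correct.
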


\begin{proof}
The set ${\mathcal O} \subset {\mathcal P}$ is not empty and compact, after Theorem \ref{teoremaexistencia0}. Assume by contradiction that ${\mathcal S}$ is empty. Then ${\mathcal O}$ is   perfect, i.e. all measure of ${\mathcal O}$ is an accumulation point. The set ${\mathcal P}$ of all the Borel probabilities in $M$ is a Polish space, since it is metric and compact. As nonempty perfect sets in a Polish space always have the cardinality of the continuum \cite{kech}, we deduce that ${\mathcal O}$ can not be countably infinite, contradicting the hypothesis.

  Even more, the argument above also shows that
    if ${\mathcal O}$ is countable infinite, then it does not contain nonempty perfect subsets.

Let us prove now that the subset ${\mathcal S}$ of isolated measures of ${\mathcal O}$ is countably infinite. Assume by contradiction that ${\mathcal S}$ is finite. Then ${\mathcal O} \setminus {\mathcal S}$ is nonempty and compact, and by construction has not isolated points. Therefore it is a nonempty perfect set, contradicting the assertion    proved above.

It is left to prove that $\mbox{dist}(\nu, {\mathcal O}) = \mbox{dist}(\nu, {\mathcal S})$ for all $\nu \in {\mathcal P}.$  This assertion, if proved, implies in particular that $\mbox{dist}(\mu, {\mathcal S}) = 0$ for all $\mu \in {\mathcal O}$, and therefore, recalling that ${\mathcal O}$ is compact, it implies $ \overline{\mathcal S} ={\mathcal O} $.

To prove that $\mbox{dist}(\nu, {\mathcal O}) = \mbox{dist}(\nu, {\mathcal S})$  for all $\nu \in {\mathcal P}$, first fix $\nu$  and take  $\mu \in {\mathcal O}$ such that $\mbox{dist}(\nu, {\mathcal O}) = \mbox{dist}(\nu,\mu) $. Such a probability $\mu$ exists because ${\mathcal O}$ is compact. If $\mu \in {\mathcal S}$, then the equality in the assertion is obtained trivially. If $\mu \in {\mathcal O} \setminus {\mathcal S}$, fix any $\varepsilon >0$ and denote ${\mathcal B}_{\varepsilon}(\mu )$ to the ball of center $\mu$ and radius $\varepsilon$. Take $\mu' \in   {\mathcal S} \bigcap \mathcal {\mathcal B}_{\varepsilon}(\mu)$.   Such $\mu'$ exists because, if not, the nonempty set
${\mathcal B}_{\varepsilon}(\mu) \cap {\mathcal O}$ would be perfect, contradicting the above proved assertion. Therefore,
$\mbox{dist}(\nu, {\mathcal S}) \leq \mbox{dist}(\nu, \mu') \leq \mbox{dist}(\nu, \mu) + \mbox{dist}(\mu, \mu') = \mbox{dist}(\nu, {\mathcal O}) + \mbox{dist}(\mu, \mu'). $ So, $   \mbox{dist}(\nu, {\mathcal S})< \mbox{dist}(\nu, {\mathcal O}) + \varepsilon.$
As this inequality  holds for all $\varepsilon >0$, we conclude that $\mbox{dist}(\nu, {\mathcal S}) \leq \mbox{dist}(\nu, {\mathcal O})$. The opposite inequality is immediate, since ${\mathcal S} \subset {\mathcal O}$.  \end{proof}

  \begin{proof} ({\em  of Theorem \ref{teoremaOcountableinfinite}}.) Denote ${\mathcal S} $ to the set of isolated measures in ${\mathcal O}$. After Lemma \ref{lemmaObservablesNumerable}, ${\mathcal S} $ is countably infinite. Thus, applying Lemma \ref{lemmaObservablesAisladas}, $\mu $ is   SRB   for all $\mu \in {\mathcal{S}}$. Then, there exist countably infinitely  many SRB measures (those in ${\mathcal S}$ and possibly some others    in ${\mathcal O} \setminus {\mathcal S}$). Denote $ \mbox{SRB}  $ to the set of all SRB measures.  After Lemma \ref{lemmaObservablesNumerable} ${\mathcal O} = \overline {\mathcal S} \subset \overline {\mbox{SRB}} \subset {\mathcal O}$. So $\overline {\mbox{SRB}} = {\mathcal O}$. It is only left to prove   that the union of the basins of attractions $A(\mu_i)$, for all $\mu_i \in \mbox{SRB}$ covers Lebesgue almost all $M$.  Denote $m$ to the Lebesgue measure.
 Applying   Theorem \ref{toeremaminimal0}:
 $p\omega(x) \subset {\mathcal O} \ \ \ m-\mbox{a.e. } x \in M.$  Together with Theorem \ref{teoremaconvexo} and with the hypothesis of countability of ${\mathcal O}$, this last assertion implies that for $m-$ a.e. $x \in M$ the set $p \omega (x)$ has a unique element $ \{\mu_x\} \subset {\mathcal O}$. Then:  \begin{equation}
 \label{eqn66}
  \ \ \ \ \ \ \ \ \ \ \ \ p\omega(x) = \{\mu_x\}  \subset {\mathcal O} \ \ \ m-\mbox{a.e. } x \in M. \ \ \ \ \ \ \ \      \end{equation}

 \noindent We write ${\mathcal O} = \{\mu_i: \ i= 1, \ldots, n\}$, where $\mu_i \neq \mu_j$ if $i \neq j$. Denote $A = \bigcup_{i \in \mathbb{N}} A(\mu_i)$, where  $A(\mu_i) := \{x \in M: \ \mu_x = \mu_i\}$. Assertion    (\ref{eqn66})
   can be written as $m(A)= 1$. In addition, $A(\mu_i) \cap A(\mu_j) = \emptyset $ if $\mu_i \neq \mu_j$. So $1 = \sum_{i= 1}^{+ \infty} m (A(\mu_i))$. After Definition \ref{Defphysical}: $\mbox{SRB} = \{\mu_i \in {\mathcal O}: \; m (A(\mu_i)) >0 \}$. We conclude that $  \sum_{{\displaystyle{\mu_i}} \in {\footnotesize{\mbox{SRB}}}} m(A(\mu_i)) = \sum_{i= 1}^{+ \infty} m (A(\mu_i)) = 1 $, as wanted. \end{proof}

\section{Examples} \label{seccionejemplos}

\begin{exa} \em
For any
transitive $C^{1+\alpha}$ Anosov diffeomorphism the unique SRB measure $\mu$ is the
unique observable measure. But there are also infinitely many other
ergodic and non ergodic invariant probabilities, that are not
observable (for instance those supported on the periodic orbits).
\end{exa}

\begin{exa}  \label{ejemploHu} \em

 In \cite{huyoung} it is studied the class of diffeomorphisms $f$ in
 the two-torus obtained from an Anosov when the unstable
 eigenvalue of $df$ at a fixed point $x_0$ is weakened to be 1, maintaining
 its stable eigenvalue strictly smaller than 1, and the uniform
hyperbolicity
 outside a neighborhood of $x_0$. It is proved that $f$ has a single
 SRB measure, which is the Dirac delta $\delta_{x_0}$ supported on $x_0$,
  and that its basin
 has total Lebesgue measure. Therefore, this is the single
 observable measure for $f$, it is ergodic and   there are infinitely many other
 ergodic and non ergodic invariant measures that are not observable.

\end{exa}

\begin{exa}  \label{ejemploCao} \em
The diffeomorphism $f\colon  [0,1]^2 \to [0,1]^2; \;\; f(x,y) =
(x/2, y)$ has ${\mathcal O}$ as the set
of  Dirac delta measures $\delta _{(0,y)}$ for all  $ y \in
[0,1]$. In this case $\mathcal O$ coincides with the set of all
ergodic invariant measures for $f$.
 Note that, for instance, the one-dimensional Lebesgue measure on
the interval $[0]\times [0,1]$ is invariant and  not observable, and that there
are not SRB-measures as defined in \ref{Defphysical}.
This example also shows that the set ${\mathcal O}$ of observable measures is not necessarily
 closed on convex combinations.
\end{exa}

\begin{exa} \em
 The maps exhibiting infinitely many simultaneous hyperbolic sinks $\{x_i\}_{i \in \mathbb{N}}$, constructed from
 Newhouse's theorem (\cite{newhouse}) has a  space ${\mathcal O}$ of observable measures which contains $\delta_{x_i}$ for all $i \in \mathbb{N}$, which, moreover, are physical measures and isolated in ${\mathcal O}$.  Also the maps exhibiting infinitely many H\'{e}non-like
 attractors, constructed by Colli in \cite{colli}, has a space of
 observable measures that contains countably infinitely many isolated
 probabilities (the SRB measures supported on the H\'{e}non-like
 attractors).
\end{exa}

\begin{exa}  \label{ejemplodossillas} \em
The following example (attributed to Bowen  \cite{takens, golenish} and early cited in \cite{japon})  shows that  even if the system is so regular as $C^{2}$, the space of
observable measures may be formed by the
limit set of   the non convergent sequence  (\ref{ec1})  for Lebesgue almost all
initial states. Consider a diffeomorphism $f$ in a ball of $\mathbb{R}^2$ with two
hyperbolic saddle points $A$ and $B$  such that a half-branch of the unstable
global manifold $W_{half}^u(A)\setminus\{A\}$  is an embedded arc that coincides
 with a half-branch of the stable global manifold $W_{half}^s(B)\setminus\{B\}$,  and conversely
   $W_{half}^u(B)\setminus\{B\} =  W_{half}^s(A) \setminus\{A\} $.
  Take $f$ such that there exists a source $C \in U$ where $U$ is the topological
open  ball   with boundary $W_{half}^u(A)\cup W_{half}^u(B) $. One can design $f$ such that for all $x \in U$
the $\alpha$-limit  is $\{C\}$ and the $\omega $-limit contains $\{A, B\}$.
  If the eigenvalues of the derivative of
$f$ at $A$ and $B$ are adequately chosen as specified in \cite{takens, golenish}, then  the empiric
  sequence  (\ref{ec1}) for all $x \in U \setminus \{C\}$ is not
convergent. It has at least two   subsequences convergent to different convex combinations of the Dirac
deltas $\delta_A$ and
  $\delta _B$.
 Applying Theorem \ref{teoremaOcountableinfinite} there exist uncountably many   observable measures. In addition, as observable measures are invariant
under $f$, due to Poincar\'{e} Recurrence Theorem  all of them are supported on $\{A\} \cup \{B\}$. So,
after Theorem \ref{teoremaconvexo}
all the observable measures are  convex combinations of $\delta _A$ and $\delta _B$ and form a segment
in the space ${\mathcal M}$ of probabilities.
 This example   shows that the observable measures are not
necessarily ergodic.

 Finally, the eigenvalues of $df$ at the saddles $A$  and $B$ can be adequately modified to obtain, instead of the result above, the convergence of the sequence (\ref{ec1})  as stated in Lemma (i) of page 457 in \cite{japon}. In fact, taking conservative saddles (and  $C^0$ perturbing $f$     outside small neighborhoods of the saddles $A$ and $B$  so the topological $\omega$-limit of the orbits  in $U \setminus \{C\}$   still contains $A$ and $B$), one can get  for all $x \in U \setminus \{C\}$  a sequence (\ref{ec1})   that is  convergent to a single measure $\mu = (\lambda) \delta_A+ (1 - \lambda) \delta _B$, with a fixed constant $0 <\lambda < 1$. So  $\mu$ is physical according with Definition \ref{Defphysical}, and is the unique observable measure. This proves that physical measures are not necessarily ergodic.
\end{exa}

\begin{exa} \label{ejemploBDV}
 \em  Consider a partially hyperbolic $C^{2 }$ diffeomorphism $f$, as defined in Section 11.2 of \cite{BonattiDiazViana}. In this family of examples, we will assume that for all $x \in M$  there exists a $df$-invariant dominated splitting
$ TM = E^u \oplus E^{cs}$, where
the sub-bundle $E^u$ is uniformly
expanding, has positive constant dimension, and the expanding exponential rate of $df|_{E^u}$ dominates that
 of $df|_{E^{cs}}$. Through every point $x \in   M$
there
exists a unique $C^2$ injectively immersed
 unstable manifold $F^u(x)$ tangent
to $E^u$. We provide below a concrete example for which such an $f$ has not any SRB-measure according with  Definition \ref{Defphysical}. Nevertheless, in Subsection 11.2.3 of \cite{BonattiDiazViana}  it is proved that $f$ possesses   probability measures $\mu$ that are Gibbs u-states; namely, $\mu$ has conditional measures $\mu_x$ respect to the unstable foliation ${\mathcal F}^u$ that are absolutely continuous respect to the internal Lebesgue measures $m^u_x$ along the leaves ${\mathcal F}^u_x$.   Precisely,   Theorem 11.16 of \cite{BonattiDiazViana} states   that  for all $x$ in a  set $E \subset M$ of initial states  such that $m_y^u(  {\mathcal F}^u_y \setminus E) = 0  $ for all $y \in M$, the convergent subsequences of the empiric probabilities (\ref{ec1})  converge  to   Gibbs u-states  (depending, a priori, of the point $x \in E$).  We provide below a concrete example for which the set $E$ has full Lebesgue measure in the ambient manifold $M$. Therefore  in this example, Theorem 11.16 of \cite{BonattiDiazViana} implies that  for Lebesgue almost all $x \in M$  the limit set $p\omega(x)$ of the sequence (\ref{ec1}) is contained in the set of Gibbs u-states. Combining  this result with   Theorem \ref{toeremaminimal0} of this paper, we deduce that in this example all the    observable or SRB-like measures    are  Gibbs u-states. Nevertheless not all Gibbs u-states are necessarily observable, since Gibbs u-states form a convex set but   ${\mathcal O}$   is not necessarily convex. Moreover, after Theorems \ref{teoremaOfinito} and \ref{teoremaOcountableinfinite}, and since in the example below there does not exist any SRB measure, the set ${\mathcal O}$ (and thus also the set of Gibbs u-states)  is    uncountable. Besides, in the example below this fact  holds simultaneously with the property that the sequence (\ref{ec1}) of empiric probabilities converge for Lebesgue almost all initial state. This latter property, and the statement that the observable measures are  Gibbs u-states, are  two remarkable    differences between  this example  \ref{ejemploBDV} and the example \ref{ejemplodossillas}. For both, no SRB measure exists and the set ${\mathcal O}$ is   uncountable.

To illustrate the ideas above, let us consider     (even being a trivial case of partially hyperbolic system)  the   $C^2$ map $f: \mathbb{T}^3 \mapsto \mathbb{T}^3$ in the three-dimensional torus $\mathbb{T}^3= (\mathbb{S}^1)^3$ constructed by
$f(x,y,z) = (x, g(x,y)) $
where $g: \mathbb{T}^2 \mapsto \mathbb{T}^2$ is a (transitive) $C^2$ Anosov.
After Sinai Theorem  there exists $\mu_1$ in the two-torus, which is $g$-ergodic, SRB-measure   and a Gibbs u-state for $g$.
Thus, the sequence (\ref{ec1}) of the empiric probabilities  for Lebesgue almost all initial states $(x,y,z) \in \mathbb{T}^3$, converges to a measure $\mu_{x} = \delta_x \times \mu_1$, which is supported on   a 1-dimensional unstable manifold injectively immersed in the two torus $\{x\} \times \mathbb{T}^2$. For different values of $x \in \mathbb{S}^1$ the measures $\mu_{x}$ are mutually singular, since they are supported on disjoint compact 2-torus embedded in $\mathbb{T}^3$. For each measure $\mu_{x}$ in $\mathbb{T}^3$, the basin of attraction $A({\mu_{x}})$ (as defined in \ref{Defphysical}) has zero Lebesgue measure in the ambient manifold $\mathbb{T}^3$. So, none of the probabilities $\mu_{x}$ is SRB for $f$. Nevertheless, after Theorem \ref{toeremaminimal0}, the set of all these measures $\mu_{x} $ (which is easy to check to be weak$^*$-compact), coincides with the set ${\mathcal O}$ of observable SRB-like measures for $f$. By construction of this concrete example any $\mu \in {\mathcal O}$ is a Gibbs u-state. Besides,   any $\mu \in {\mathcal O}$ is ergodic, and since there exist  many observable probabilities  and since all convex combination of Gibbs u-states is also a Gibbs u-state, we conclude   that there exist  Gibbs u-states that are not observable.

\end{exa}

\subsection*{Acknowledgements}
The first author was partially financed by CSIC of Universidad de la Rep\'{u}blica and ANII of Uruguay. We thank   Yakov  Sinai for his   comments about the very preliminary version of this paper,   Stephano Luzzatto, Martin Andersson and  the   anonymous  referees   for their   suggestions, and Anton Gorodetski   for providing us the bibliographic reference \cite{golenish}.

\vspace{.5cm}

\end{document}